\documentclass[12pt]{article}
\usepackage[final]{epsfig}
\usepackage{graphics}
\usepackage{amsmath}
\usepackage{amsfonts}
\usepackage{latexsym}
\usepackage{amssymb}
\usepackage{graphicx}
\usepackage{url}
\usepackage{epstopdf}
\usepackage{hyperref}
\usepackage{xcolor}
\usepackage{comment}
\usepackage{tikz}
\usepackage{marginnote}
\usetikzlibrary{arrows.meta, positioning}

\newtheorem{lemma}{Lemma}[section]

\newtheorem{remark}[lemma]{Remark}

\newtheorem{theorem}{Theorem}

\newcommand{\eps}{{\varepsilon}}
\newcommand{\proofend}{$\Box$\bigskip}
\newcommand{\C}{{\mathbb C}}

\newcommand{\R}{{\mathbb R}}

\newcommand{\Z}{{\mathbb Z}}
\newcommand{\RP}{{\mathbb {RP}}}

\def\proof{\paragraph{Proof.}}
\def\Ker{\mathop{\rm Ker}}

\title{Symplectic billiards as Minkowski billiards}

\author{Peter Albers\footnote{
Institute for Mathematics,
Heidelberg University,
69120 Heidelberg,
Germany;
palbers@mathi.uni-heidelberg.de}
 \and 
Ana Chavez Caliz
\footnote{
Instituto de Matem\'aticas,
Universidad Nacional Aut\'onoma de M\'exico,
62210 Cuernavaca,
Mexico;
ana.chavez@im.unam.mx}
\and
 Serge Tabachnikov\footnote{
Department of Mathematics,
Pennsylvania State University,
University Park, PA 16802,
USA;
tabachni@math.psu.edu}
} 

\date{}

\begin{document}

\maketitle

\begin{abstract}
We establish a connection between Minkowski billiards and symplectic billiards, 
two classes of dynamical systems that have been studied largely independently. 
We show that the Minkowski billiard map can be described in symplectic terms via 
reduction from the canonical symplectic structure on $V \times V^*$, and that 
symplectic billiards can be viewed as a ``square root'' of a symplectic version 
of Minkowski billiards.

As an application, we recover several known results on symplectic billiards from 
the more general Minkowski setting, and extend some of them to higher dimensions 
and to periodic orbits of even period. In particular, we prove the existence of 
at least $(r-1)(n-1)$ $2r$-periodic symplectic billiard orbits in dimension $2n$.
\end{abstract}

\section{Introduction} \label{sect:intro}

The goal of this paper is to connect two ``stories" that seem to have evolved
independently of each other. The first ``story" concerns Finsler billiards,
introduced a while ago in \cite{GT}, more specifically their particular case,
Minkowski billiards. This subject remained in a dormant stage until its
connections with symplectic capacities and the Viterbo and Mahler conjectures 
were discovered and studied in \cite{ABKS,AO,AKO}. See also \cite{KR,Ru,VZ}. We
recall the definition and main properties of Minkowski billiards in Section
\ref{sect:Mink}.

The second ``story'' concerns symplectic billiards, introduced in \cite{AT}.
This is a billiard system inside a closed strictly convex smooth hypersurface
$M$ in a symplectic space defined as follows. Let $xy$ be a chord of $M$. The
symplectic billiard map $\varphi_S$ takes it to the chord $yz$ where $xz$ is parallel to 
the characteristic direction at point $y$, that is, the kernel of the restriction 
of the symplectic form to $T_yM$, see Figure \ref{reflection}.

\begin{figure}[ht]
\centering
\includegraphics[width=1.5in]{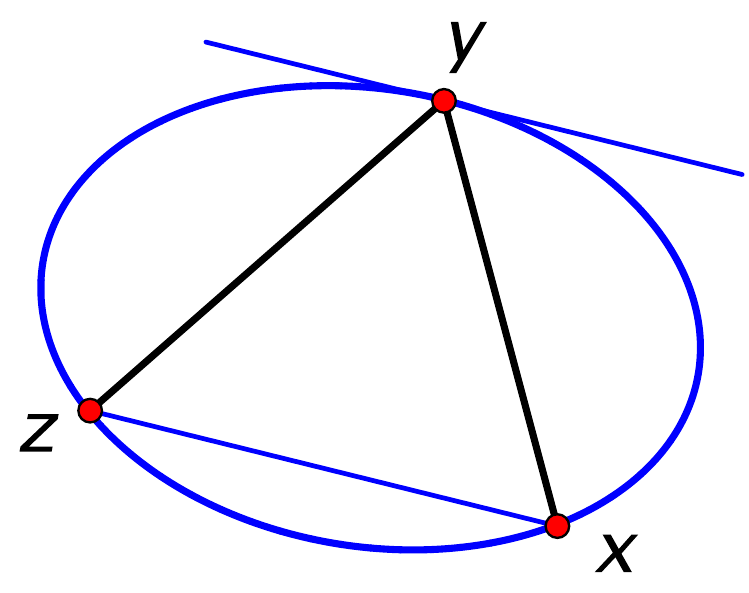}
\caption{The symplectic billiard reflection: $xy \mapsto yz$. }
\label{reflection}
\end{figure}

In addition to \cite{AT}, symplectic billiards were also studied in \cite{BB,BBN} 
independently of Minkowski billiards. A notable exception is the 
paper \cite{ALW} that implicitly connected these two subjects.
Here we make this connection explicit (Section \ref{subsect:sympM}) and show that 
some of the results on symplectic billiards follow from more general ones on Minkowski 
billiards; see Section \ref{subsect:ell}.   

\bigskip

{\bf Acknowledgments}. 

PA acknowledges funding by the Deutsche Forschungsgemeinschaft (DFG, German
Research Foundation) through Germany's Excellence Strategy EXC-2181/1 -
390900948 (the Heidelberg STRUCTURES Excellence Cluster), the Transregional
Collaborative Research Center CRC/TRR 191 (281071066). ACC was also funded by
the DFG through Project-ID 281071066 – TRR 191, and by Simons Foundation
International through the grant SFI-MPS-T-Institutes-00011977 JS. %LEC thanks the
%Heidelberg University and the Institut Henri Poincar\'{e} for their hospitality.
ST was supported by NSF grant DMS-2404535, Simons Foundation grant
MPS-TSM-00007747, and by a Mercator fellowship within the CRC/TRR 191. He thanks
the Heidelberg University and the Tel Aviv University for their hospitality.
The authors are grateful to Lael Costa for many helpful discussions.

\section{Minkowski billiards} \label{sect:Mink}

\subsection{Polar duality, Legendre transform, Minkowski metric}
\label{subsect:polar}
We start with a reminder of some basic notions of convex geometry. 

Let $V$ be a finite-dimensional vector space and $V^*$ its dual space. Let $Q\subset V$ be a
strictly convex closed $C^1$-smooth hypersurface that contains the origin in its
interior, i.e. the region it bounds in $V$. To every vector $q \in Q$ one assigns the covector $p \in V^*$
uniquely defined by the conditions $$ \Ker p = T_q Q\ \ \ {\rm and}\ \ \ q\cdot
p =1, $$ where the dot denotes the pairing of vectors and covectors. The set of these
covectors $p$ comprises a closed hypersurface $P \subset V^*$ that is also
strictly convex and contains the origin in its interior. The hypersurface $P$ is
called the {\it polar dual} to $Q$, written as $P=Q^*$, and the map ${\mathcal
L}: q \mapsto p$ is called the {\it Legendre transform}. The hypersurface polar
dual to $P$ is $Q$, and the Legendre transform $P \to Q$ is the inverse of the
Legendre transform $Q \to P$. Of course, here we use the canonical identification $V\cong V^{**}$.

A (not necessarily symmetric) Minkowski metric in a vector space $V$ is defined
by a strictly convex closed $C^1$-smooth hypersurface $I \subset V$ that
contains the origin in its interior as follows. The norm $|v|$ of a non-zero vector $v \in V$ is, by definition, 
$|v| := t$ where $t$ satisfies $\frac{v}{t} \in I$ and $t> 0$. The hypersurface $I$ is the unit
sphere in this metric; it is called the {\it indicatrix}, and its polar dual is
called the {\it figuratrix}. 

\subsection{Definition of Minkowski billiards, phase space, and generating function}
\label{subsect:def}
Recall the definitions and basic results concerning Minkowski billiards, see,
e.g., \cite{GT}.

Let $V$ be a vector space. The Minkowski billiard system is defined by two
closed $C^1$-smooth hypersurfaces that contain the origin in their interiors,
$Q\subset V$ and $P\subset V^*$. One thinks of $Q$ as a billiard table and of
$P$ as the figuratrix of a Minkowski metric, although the roles played by these
two hypersurfaces are totally symmetric.

The phase space ${\mathcal S}$ of the Minkowski billiard consists of pairs
$(q,v)$ where $q\in Q$ and $v\in I=P^*$ is a Minkowski unit vector with footpoint $q$
having an inward direction, see Figure \ref{refl1}. The Minkowski billiard map $\varphi_M$ is the composition of two
maps
$$
\varphi_M: (q,v) \mapsto (q_1,v) \mapsto (q_1,v_1), 
$$
where $q_1 \in Q$ is the intersection point of $Q$ with the line through $q$,
spanned by $v$, and $v_1$ is the Minkowski unit vector defined by the condition
$$ ({\mathcal L}(v_1)-{\mathcal L}(v))|_{T_{q_1} Q}  =0. $$

The Minkowski billiard map is illustrated in Figure \ref{refl1}, where, for ease
of visualization, the space $V$ is identified with $V^*$ by the Euclidean
structure, so  the Legendre transform of a point $p$ is represented by an
outward normal vector $N(p)$ to the respective hypersurface. 

\begin{figure}[ht]
\centering
\includegraphics[width=4in]{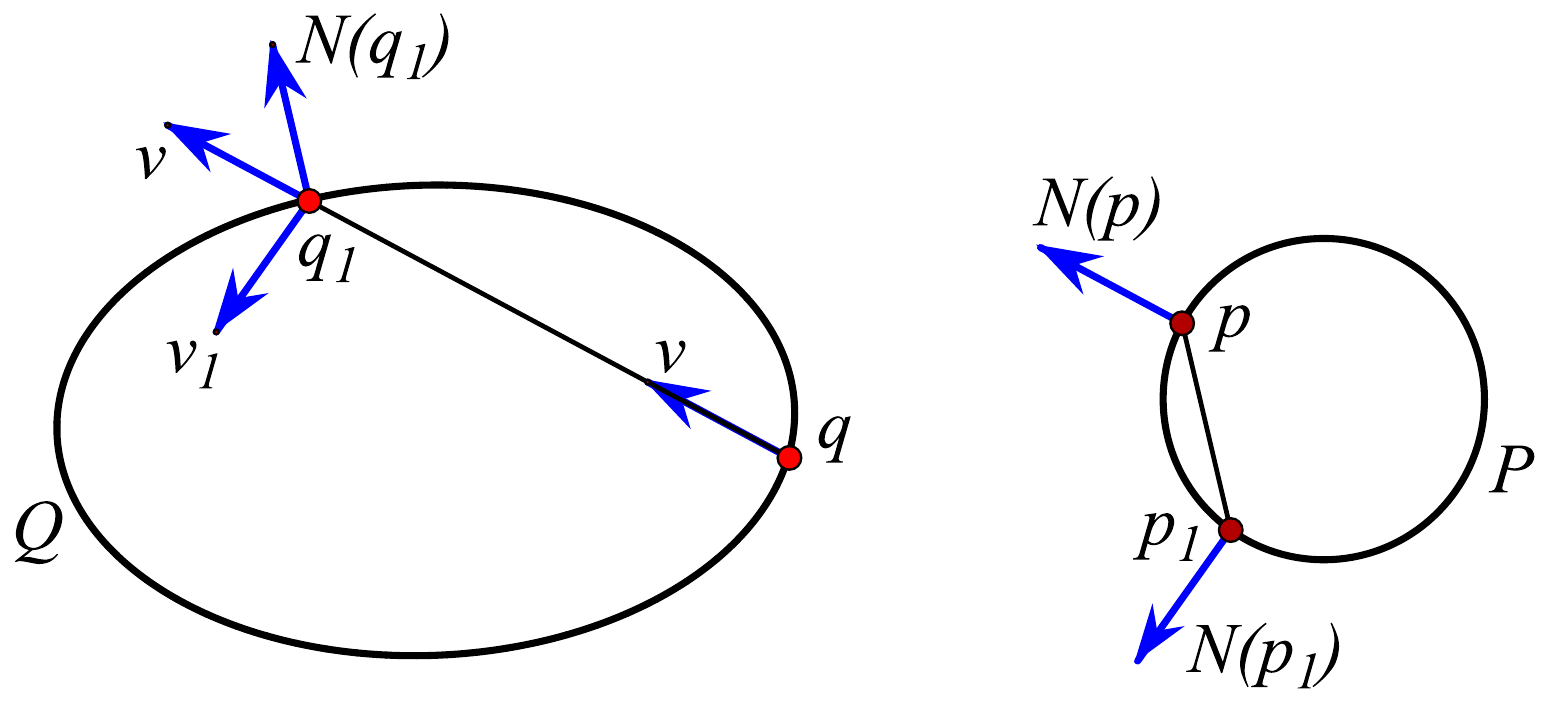}
\caption{Minkowski billiard reflection: $p={\mathcal L}(v), p_1={\mathcal
L}(v_1)$, and $(p_1-p)$ is antiparallel to ${\mathcal L}(q_1)$ (i.e., they are
parallel with opposite orientation). }
\label{refl1}
\end{figure}

If $P$ is the unit circle of a Euclidean metric, the reflection law is the
familiar law of optics: the angle of incidence equals the angle of reflection.
That is, the usual billiard is a particular case of Minkowski billiard.

\begin{figure}[ht]
\centering
\includegraphics[width=3.3in]{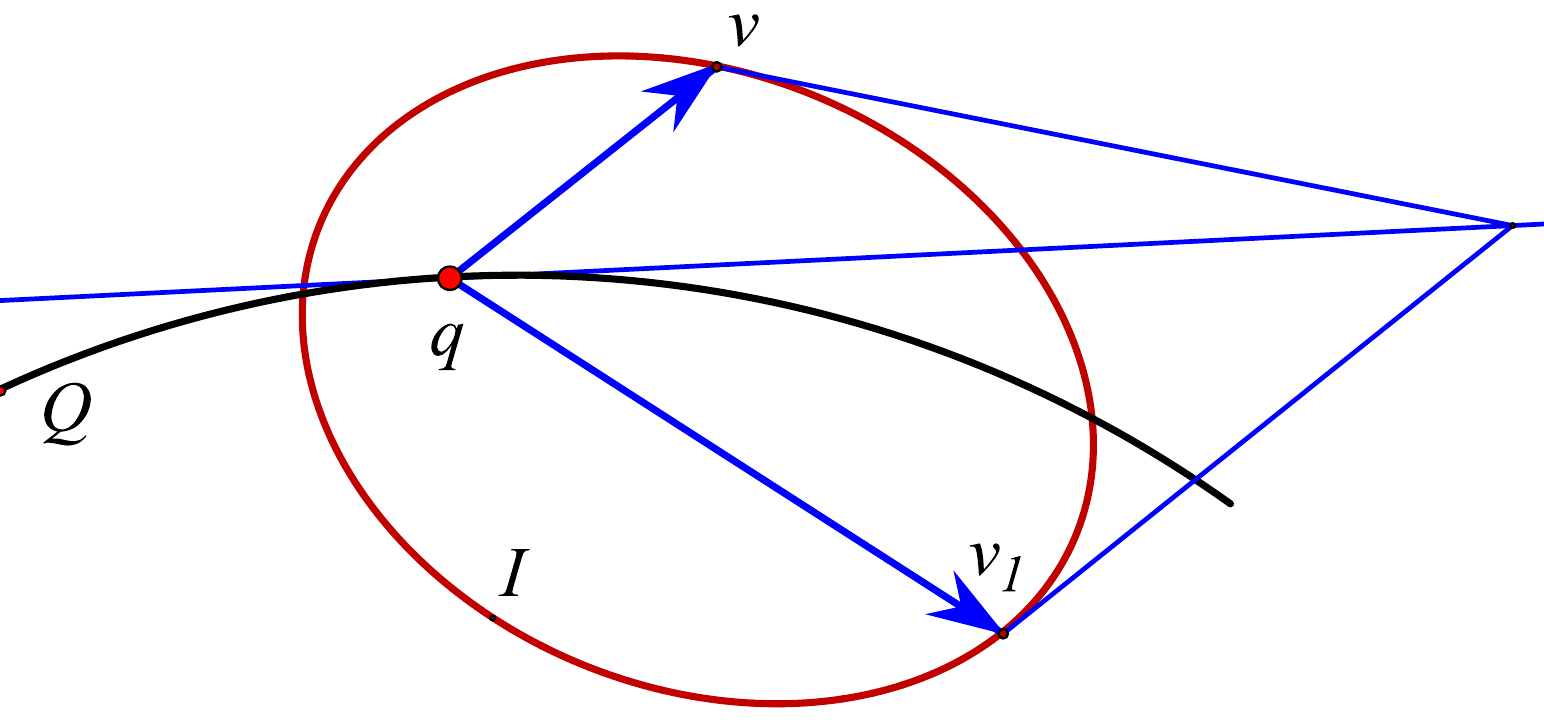}
\caption{Minkowski billiard reflection in dimension two: $I$ is the indicatrix
of the Minkowski metric, $v$ and $v_1$ are the incoming and outgoing unit
velocity vectors. The vector $v_1$ is determined by the triple intersection of the tangent line $T_qQ$ with the tangent lines to $I$ at $v$ and $v_1$.}
\label{refl2}
\end{figure}

Equivalently, this being the original motivation, the Minkowski billiard
reflection is defined by the same variational principle as in the usual
(Euclidean) billiards: a chord $qq_1$ reflects to $q_1q_2$ if the Minkowski
distance from $q$ to $q_1$ to $q_2$ is critical with respect to $q_1$:
\begin{equation} \label{eq:ref}
\frac{\partial}{\partial q_1} \left(|q-q_1| + |q_1-q_2|\right) =0.
\end{equation}
This translates to a  geometrically defined reflection law depicted in Figure
\ref{refl2}.

For example, consider the case when the indicatrix is a Kepler ellipse, that is,
an ellipse with the origin at a focus. Then, due to a theorem by de La
Hire\footnote{We thank A. Albouy for this  reference.}, the Minkowski reflection
law coincides with the optical law ``the angle of incidence equals the angle of
reflection,'' depicted in Figure \ref{ellipse}. See \cite{Ta} for relations with 
magnetic billiards and \cite{AK} for a multidimensional generalization of this fact.

\begin{figure}[ht]
\centering
\includegraphics[width=2.3in]{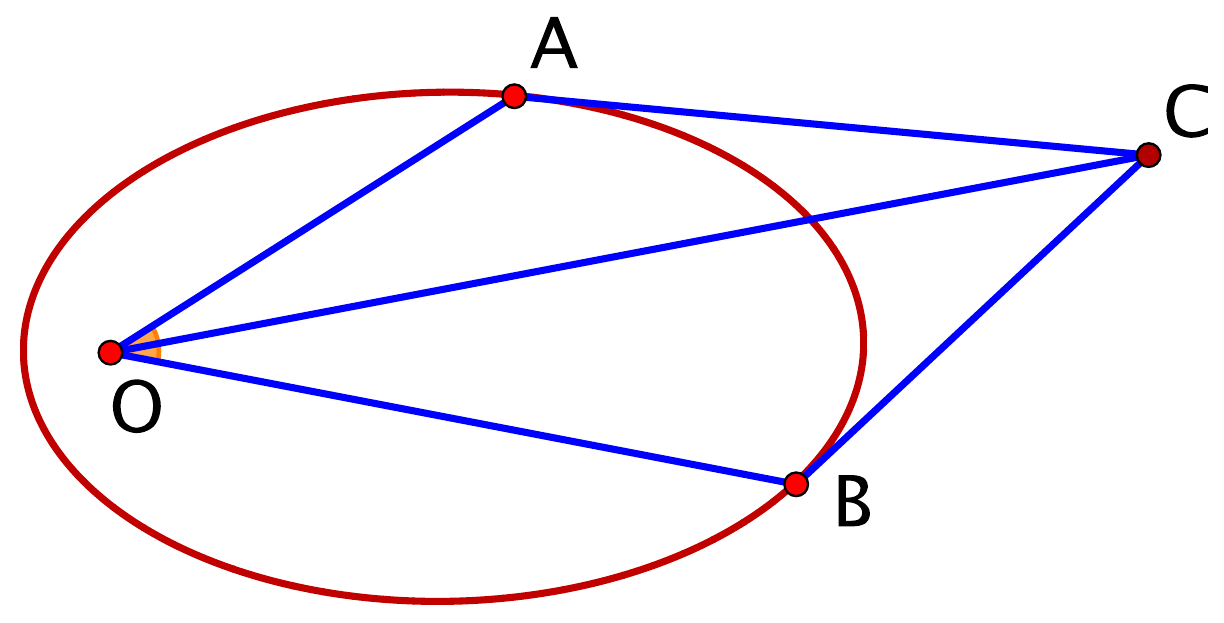}
\caption{For any pair of points $A$ and $B$ on an ellipse with focus $O$, one
has $\angle AOC = \angle BOC$.}
\label{ellipse}
\end{figure}

Let us return to the phase space ${\mathcal S}=\{(q,v)\}$. Let $v={\mathcal
L}(p)$ for $p \in P$. Since $v$ has an inward direction, ${\mathcal L} (q)
\cdot v < 0$. Thus we have the following, more symmetric, presentation of the
phase space
\begin{equation} \label{eq:phase}
{\mathcal S}= \{(q,p) \mid q\in Q, p \in P, {\mathcal L} (q) \cdot {\mathcal L}
(p) < 0\}.
\end{equation}
We write $a \sim b$ to denote  two vectors that are proportional with a positive
coefficient.

The next lemma shows that the space ${\mathcal S}$ defined in (\ref{eq:phase})
is invariant under the Minkowski billiard map.

\begin{lemma} \label{lm:cordef}
Let the Minkowski billiard map take $(q,v)$ to $(q_1,v_1)$, where $v={\mathcal
L}(p)$ and $v_1={\mathcal L}(p_1)$. Then ${\mathcal L} (q_1) \cdot {\mathcal L}
(p_1) < 0$.
\end{lemma}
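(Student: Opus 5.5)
Write $N:={\mathcal L}(q_1)\in V^*$. It is the covector with $\Ker N=T_{q_1}Q$ and $q_1\cdot N=1$, so it is positive on vectors pointing outward from $Q$ at $q_1$. The plan is to show $N\cdot v_1<0$ by combining two facts: the incoming vector $v$ points outward at $q_1$, and the reflection changes the covector by a negative multiple of $N$. Convexity of the figuratrix $P$ then does the rest.

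\emph{Step 1.} Since $(q,v)\in{\mathcal S}$, the vector $v$ points inward at $q$. The line through $q$ in direction $v$ meets the strictly convex hypersurface $Q$ a second time at $q_1=q+tv$ with $t>0$, and there it exits the region. Hence $N\cdot v\ge 0$. Strict convexity gives more. If $N\cdot v=0$, the chord $q_1-q$ would lie in $T_{q_1}Q$, so the segment would lie in a supporting hyperplane of a strictly convex body, which is impossible. Therefore $N\cdot v>0$.

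\emph{Step 2.} The reflection condition $(p_1-p)|_{T_{q_1}Q}=0$ says $p_1-p=\lambda N$ for some $\lambda\in\R$, where $p={\mathcal L}(v)$ and $p_1={\mathcal L}(v_1)$ both lie in $P$. By Legendre duality, $v={\mathcal L}(p)$ satisfies $\Ker v=T_pP$ and $v\cdot p=1$. As a linear functional on $V^*$, $v$ is therefore the outward supporting functional of $P$ at $p$, so $v\cdot p'\le 1$ for all $p'$ in the convex body bounded by $P$, with equality only at $p'=p$. The same holds for $v_1$ at $p_1$. We may assume $p_1\ne p$, which is the nontrivial reflected chord, so $\lambda\neq0$.

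\emph{Step 3: sign of $\lambda$.} The line $p+sN$ meets $P$ at $s=0$ and $s=\lambda$ only. Since $v\cdot N>0$, moving from $p$ in the direction $+N$ increases $v\cdot(\cdot)$ above $1$, which takes us outside $P$. So the second intersection must have $\lambda<0$, which is the antiparallel statement in Figure~\ref{refl1}.

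\emph{Step 4.} Apply the supporting inequality at $p_1$ to the point $p$:
$$v_1\cdot p<1=v_1\cdot p_1.$$
This gives $v_1\cdot(p_1-p)>0$, that is, $\lambda\,(v_1\cdot N)>0$. Since $\lambda<0$, we conclude $N\cdot v_1<0$, i.e.\ ${\mathcal L}(q_1)\cdot{\mathcal L}(p_1)<0$.

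The main obstacle is Step 3, fixing the sign of $\lambda$, that is, choosing the correct one of the two intersections of the line with $P$. The supporting-hyperplane argument handles it cleanly, provided one keeps track of which side of the hyperplane $v=1$ the body $P$ lies on.
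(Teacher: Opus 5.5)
Your proof is correct and follows the paper's argument: convexity of $Q$ gives ${\mathcal L}(q_1)\cdot{\mathcal L}(p)>0$, convexity of $P$ gives ${\mathcal L}(p_1)\cdot(p_1-p)>0$, and the relation $p_1-p\sim-{\mathcal L}(q_1)$ combines them into the claim. The only difference is that your Step 3 derives the sign in that relation from ${\mathcal L}(q_1)\cdot{\mathcal L}(p)>0$ and convexity of $P$, whereas the paper takes the antiparallel direction from its description of the reflection law without proof.
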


\proof
Due to convexity, ${\mathcal L}(q_1) \cdot (q_1-q) > 0$, and since $q_1-q \sim
{\mathcal L}(p)$, one has ${\mathcal L}(q_1) \cdot {\mathcal L}(p) > 0$.
Likewise, ${\mathcal L}(p_1) \cdot (p_1-p) > 0$, and since $p_1-p \sim -
{\mathcal L}(q_1)$, one has  ${\mathcal L} (q_1) \cdot {\mathcal L} (p_1) < 0$,
as claimed.
\proofend

The closure $\bar {\mathcal S}$ is obtained by adding to ${\mathcal S}$ the set  
$$
{\mathcal O}=\{(q,p) | q\in Q, p \in P,
{\mathcal L} (q) \cdot {\mathcal L} (p) =0\}.
$$
By continuity, the Minkowski billiard map extends to this set as the identity
map. 

Notice that ${\mathcal O} \subset Q \times P$ is a hypersurface that partitions
$Q\times P$ into two diffeomorphic subspaces. Indeed, for every point $q\in Q$,
there is a unique point $q^* \in Q$ such that ${\mathcal L} (q)$ and  ${\mathcal
L} (q^*)$ are antiparallel. Then the involution $(q,p) \mapsto (q^*,p)$ is such
a diffeomorphism.  One can define a Minkowski billiard map on the other half
$$
\{(q,p) | q\in Q, p \in P, {\mathcal L} (q) \cdot {\mathcal L} (p) > 0\}
$$
as well, but it is conjugate to the inverse of the previously defined map.

To reiterate,  a configuration
\begin{equation} \label{eq:conf}
\ldots (q_{-1},p_{-1}), (q_0,p_0), (q_1, p_1),\ldots
\end{equation}
is an orbit of the Minkowski billiard map if \[q_{i+1} - q_i \sim {\mathcal
L}(p_i)\ \ {\rm and}\ \ p_{i+1} - p_i \sim -{\mathcal L}(q_{i+1}).\] It follows
that the configuration $\ldots (p_{-1}, -q_0), (p_0, -q_1), (p_1, -q_2), \ldots$
is an orbit of the Minkowski billiard map associated with the hypersurfaces $(P,
-Q)$ (Theorem 7.1 in \cite{GT} or Proposition 3.5 in \cite{KR}).

One also has the following lemma (Proposition 7.3 in \cite{GT}).

\begin{lemma} \label{lm:genf}
If the orbit (\ref{eq:conf}) is $n$-periodic, then it is a critical point of the
function
\begin{equation} \label{eq:gen}
\Phi:= \sum_{i=1}^n (q_{i+1}-q_i) \cdot p_i = - \sum_{i=1}^n q_i \cdot (p_i -
p_{i-1})
\end{equation} 
on the space ${\mathcal S}\times \dots \times {\mathcal S}$ ($n$ times), where
the indices are understood cyclically modulo $n$.
\end{lemma}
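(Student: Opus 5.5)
We compute the differential of $\Phi$ directly, treating the points $q_i\in Q$ and $p_i\in P$ as independent variables. The constraint $(q_i,p_i)\in{\mathcal S}$ is open in $Q\times P$, so only the hypersurface constraints $q_i\in Q$, $p_i\in P$ matter. The first step is to check that the two expressions in (\ref{eq:gen}) agree. This is summation by parts with cyclic indices:
$$\sum_i q_{i+1}\cdot p_i=\sum_i q_i\cdot p_{i-1},$$
so
$$\sum_i (q_{i+1}-q_i)\cdot p_i=-\sum_i q_i\cdot(p_i-p_{i-1}).$$

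Next we vary a single $q_i$ and keep everything else fixed. Since $q_i$ appears only in the terms $(q_i-q_{i-1})\cdot p_{i-1}$ and $-q_i\cdot p_i$, the second expression gives
$$\partial\Phi/\partial q_i = -(p_i-p_{i-1}),$$
viewed as a covector on $V$ and restricted to $T_{q_i}Q$. The orbit condition gives $p_i-p_{i-1}\sim-{\mathcal L}(q_i)$. Since $\Ker {\mathcal L}(q_i)=T_{q_i}Q$ by the definition of the Legendre transform, this covector vanishes on $T_{q_i}Q$. Hence $d_{q_i}\Phi=0$.

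Similarly, $p_i$ appears only in $(q_{i+1}-q_i)\cdot p_i$, so the first expression gives
$$\partial\Phi/\partial p_i=q_{i+1}-q_i,$$
viewed as a vector in $V=V^{**}$ and restricted to $T_{p_i}P$. The orbit condition gives $q_{i+1}-q_i\sim{\mathcal L}(p_i)$, and $\Ker{\mathcal L}(p_i)=T_{p_i}P$ (with ${\mathcal L}(p_i)$ now regarded as a covector on $V^*$). So this derivative also vanishes on the tangent space, and $d_{p_i}\Phi=0$.

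Since every partial differential vanishes, the configuration is a critical point of $\Phi$ on ${\mathcal S}^n$. The only point needing care is the bookkeeping of the cyclic indexing, namely that the reflection at $q_i$ involves $p_{i-1}$ and $p_i$. This matches the orbit rules $q_{i+1}-q_i\sim{\mathcal L}(p_i)$ and $p_{i}-p_{i-1}\sim-{\mathcal L}(q_{i})$, the latter being the rule $p_{i+1}-p_i\sim-{\mathcal L}(q_{i+1})$ with the index shifted. I expect no genuine obstacle. As a remark, the argument reverses: critical points have $p_i-p_{i-1}$ proportional to ${\mathcal L}(q_i)$, though possibly with the wrong sign or zero.

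\proofend
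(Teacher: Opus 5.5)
Your proof is correct and follows the same route as the paper. The paper phrases the key step as the fact that a fixed linear function $v\cdot p$ is critical on $P$ exactly when ${\mathcal L}(p)\sim\pm v$; this is your observation that $q_{i+1}-q_i$ (respectively $p_i-p_{i-1}$) is proportional to a Legendre transform whose kernel is the tangent space, applied to the first (respectively second) expression for $\Phi$. Your explicit summation-by-parts check that the two expressions agree, and your remark that openness of ${\mathcal S}$ leaves only the hypersurface constraints, are details the paper leaves implicit.
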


\proof
Let $v \in V$ be a fixed vector and let $p \in P$ be variable. Then $v \cdot p$
is extremal if and only if ${\mathcal L}(p) \sim \pm v$, where the plus sign
corresponds to the maximum and the minus sign to the minimum. Along an orbit,
one has $q_{i+1} - q_i \sim {\mathcal L}(p_i)$, hence  $\partial \Phi/\partial
p_i =0$. A similar argument, using the second formula for the function $\Phi$,
yields $\partial \Phi/\partial q_i =0$.
\proofend

\begin{remark} 
{\rm Although the definition of the polar dual hypersurface depends on the
    choice of the origin, the Minkowski billiard system is invariant under
    parallel translations of the hypersurfaces $Q$ and $P$. In particular, the
    function $\Phi$ from Lemma \ref{lm:genf} is invariant under such
    translations.
}
\end{remark}

\subsection{2-periodic orbits} \label{subsect:2per}

It is known that a strictly convex closed $C^1$-smooth hypersurface in
$n$-dimensional space has at least $n$ diameters, that is, the  chords
orthogonal to the hypersurface at both endpoints (see \cite{Ku}, Theorem 4).
These diameters are 2-periodic billiard trajectories. We extend this result to
Minkowski billiards.

\begin{theorem} \label{thm:diam}
The Minkowski billiard associated with hypersurfaces $Q \subset V$ and $P
\subset V^*$, where $\dim \: V =n$, has at least $n$ two-periodic orbits.
\end{theorem}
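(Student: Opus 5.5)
A 2-periodic orbit is a pair $(q_1,p_1),(q_2,p_2)$ with
$$q_2-q_1\sim{\mathcal L}(p_1),\quad q_1-q_2\sim{\mathcal L}(p_2),\quad p_2-p_1\sim-{\mathcal L}(q_2),\quad p_1-p_2\sim-{\mathcal L}(q_1).$$
By Lemma \ref{lm:genf}, such orbits are critical points of
$$\Phi=(q_2-q_1)\cdot p_1+(q_1-q_2)\cdot p_2=(q_2-q_1)\cdot(p_1-p_2).$$
The plan is to maximize $\Phi$ over pairs of directions, reduce this to a Lusternik--Schnirelmann count on $\RP^{n-1}$, and thereby obtain $n$ distinct critical points.

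\textbf{Step 1: reduce to a function on the sphere of directions.} Fix a direction $u\in S^{n-1}\subset V$ (unit sphere for an auxiliary Euclidean structure). Let $d_Q(u)=\max\{(q_2-q_1)\cdot\xi\}$ be... Instead, I organize by covector directions. For $\xi\in V^*\setminus 0$, let the width of $Q$ in direction $\xi$ be
$$w_Q(\xi)=\max_{q\in Q}q\cdot\xi-\min_{q\in Q}q\cdot\xi,$$
attained at the two points where ${\mathcal L}(q)\sim\pm\xi$. For fixed $q_2-q_1=v$, maximizing over $p_1,p_2$ gives $\max(p_1-p_2)\cdot v=w_P(v)$, attained exactly when ${\mathcal L}(p_1)\sim v$ and ${\mathcal L}(p_2)\sim -v$. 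Hence
$$\max\Phi=\max_{q_1,q_2\in Q} w_P(q_2-q_1).$$
Here $w_P$ is the support function $h_P(v)+h_P(-v)$, a symmetric norm on $V$, namely the norm whose unit ball is $\tfrac12(I\text{-ball}-I\text{-ball})$, up to a constant.

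Maximizing $w_P(q_2-q_1)$ over chords of $Q$ is the same as finding the diameters of $Q$ with respect to the smooth, strictly convex, symmetric norm $N=w_P$. The critical chords satisfy two conditions. First, from the variation in $q_2$, $dN(q_2-q_1)$ annihilates $T_{q_2}Q$, i.e. ${\mathcal L}(q_2)\sim dN(v)=p_1-p_2$; the variation in $q_1$ gives ${\mathcal L}(q_1)\sim p_2-p_1$. Second, $p_1,p_2$ are chosen as the points of $P$ with ${\mathcal L}(p_{1})\sim v$ and ${\mathcal L}(p_{2})\sim -v$. Together these are exactly the four 2-periodic conditions (using $dh_P(v)=p_1$ and $dh_P(-v)=p_2$). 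So 2-periodic orbits correspond to critical chords of $N$ restricted to chords of $Q$, i.e. Minkowski diameters of $Q$ in a symmetric norm.

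\textbf{Step 2: count the critical chords.} I parametrize chords by the direction $\xi\in S(V^*)$ of their supporting pair: $q_\pm(\xi)$ are the points with ${\mathcal L}(q_\pm)\sim\pm\xi$. Then the chord $q_+(\xi)-q_-(\xi)$ is normal to $Q$ at both ends. Define
$$F(\xi)=\frac{\xi\cdot(q_+-q_-)}{h_{P^*\text{-dual}}},$$
more concretely the width of $Q$ in direction $\xi$ divided by the dual norm $N^*(\xi)$. This $F$ is even, so it descends to $\RP^{n-1}$, and it is $C^1$ because $Q$ and $P$ are strictly convex and $C^1$.

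I will check by an envelope-type computation that $dF(\xi)=0$ iff $q_+-q_-$ is $N$-dual to $\xi$, i.e. $dN(q_+-q_-)\sim\xi$, which is precisely the critical-chord condition of Step 1. Lusternik--Schnirelmann theory on $\RP^{n-1}$, whose category is $n$, then gives at least $n$ critical points, distinct as orbits. The classical proof \cite{Ku} (the Euclidean case, $N$ Euclidean) is the template.

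\textbf{Main obstacle.} The delicate step is the computation of $dF$ and its identification with the orbit equations. This uses the duality $N^*(\xi)=\max_v \xi\cdot v/N(v)$ together with the fact that $\xi\mapsto\xi\cdot(q_+-q_-)$ has derivative $q_+-q_-$ (the support-function derivative). With these, $dF=0$ becomes "$q_+-q_-$ is proportional to $dN^*(\xi)$", which is equivalent to $dN(q_+-q_-)\sim\xi$ by Legendre duality of norms.

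A second point to handle is regularity: $F$ is only $C^1$, and Lusternik--Schnirelmann theory applies to $C^1$ functions on compact manifolds, so this is sufficient. Finally, the orbits lie in ${\mathcal S}$ rather than in ${\mathcal O}$, since ${\mathcal L}(q_2)\cdot{\mathcal L}(p_2)\sim-(p_1-p_2)\cdot v<0$.
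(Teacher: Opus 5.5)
Your argument is correct, and it follows a genuinely different route from the paper. The paper keeps all four variables. It works with $\Phi(q,p,q_1,p_1)=(q_1-q)\cdot(p-p_1)$ on $M=\{\Phi>0\}\subset Q\times P\times Q\times P$. It shows $M\simeq S^{n-1}$ by projecting to the normalized pair $(q_1-q,\,p-p_1)$, whose fibres are contractible. It then cuts off near the diagonal by passing to $M_\eps=\{\Phi\ge\eps\}$, a manifold with boundary along which the gradient points inward, and applies Lusternik--Schnirelmann theory to $M_\eps/\Z_2\simeq\RP^{n-1}$.

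You instead eliminate variables in two steps:
\begin{itemize}
\item Maximizing over $p_1,p_2$ turns $\Phi$ into $w_P(q_2-q_1)=|q_2-q_1|+|q_1-q_2|$, the Minkowski length of the 2-gon. This is just the variational principle (\ref{eq:ref}) at period two.
\item Parametrizing chords by the conormal $\xi$ at their endpoints reduces everything to the relative width $F=w_Q/N^*$, a $C^1$ function on the closed manifold $\RP^{n-1}$. Here $N=w_P$ is the support function of $P\oplus(-P)$ and $N^*$ its gauge.
\end{itemize}

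Your verification goes through. $dF(\xi)=0$ iff $q_+-q_-=F(\xi)\,dN^*(\xi)$. Since $P\oplus(-P)$ inherits strict convexity and $C^1$-smoothness from $P$, $N^*$ is $C^1$ and the duality $dN^*(\xi)\sim v\Leftrightarrow dN(v)\sim\xi$ holds. The critical-point condition is therefore equivalent to the four orbit conditions with $p_1-p_2=dN(q_+-q_-)$, and all proportionality constants come out positive automatically. Moreover $\pm\xi$ give the same orbit and distinct classes $[\xi]$ give distinct orbits, so the count is exact.

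What your route buys is a closed domain: no cutoff near the diagonal, no boundary behaviour of the gradient, and no homotopy-type computation. It is the direct analogue of the classical width-function argument for double normals. The paper's route stays within the generating-function formalism of Lemma \ref{lm:genf} and treats $Q$ and $P$ symmetrically.

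A few slips in the write-up, none affecting the argument:
\begin{itemize}
\item The unit ball of $w_P$ is the polar of $P\oplus(-P)$. In general this is not homothetic to the difference body of the $I$-ball, since polarity does not commute with Minkowski sums.
\item The aborted definition of $d_Q(u)$ and the expression with ``$h_{P^*\text{-dual}}$'' should simply be replaced by $F=w_Q/N^*$.
\item In Step 1, a critical point in $(q_1,q_2)$ gives ${\mathcal L}(q_2)$ proportional to $dN(v)$ only up to sign. Positivity needs $dN(v)\cdot v=N(v)>0$ together with ${\mathcal L}(q_2)\cdot(q_2-q_1)>0$, though Step 2 bypasses this.
\end{itemize}
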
 

\begin{figure}[ht]
\centering
\includegraphics[width=5.3in]{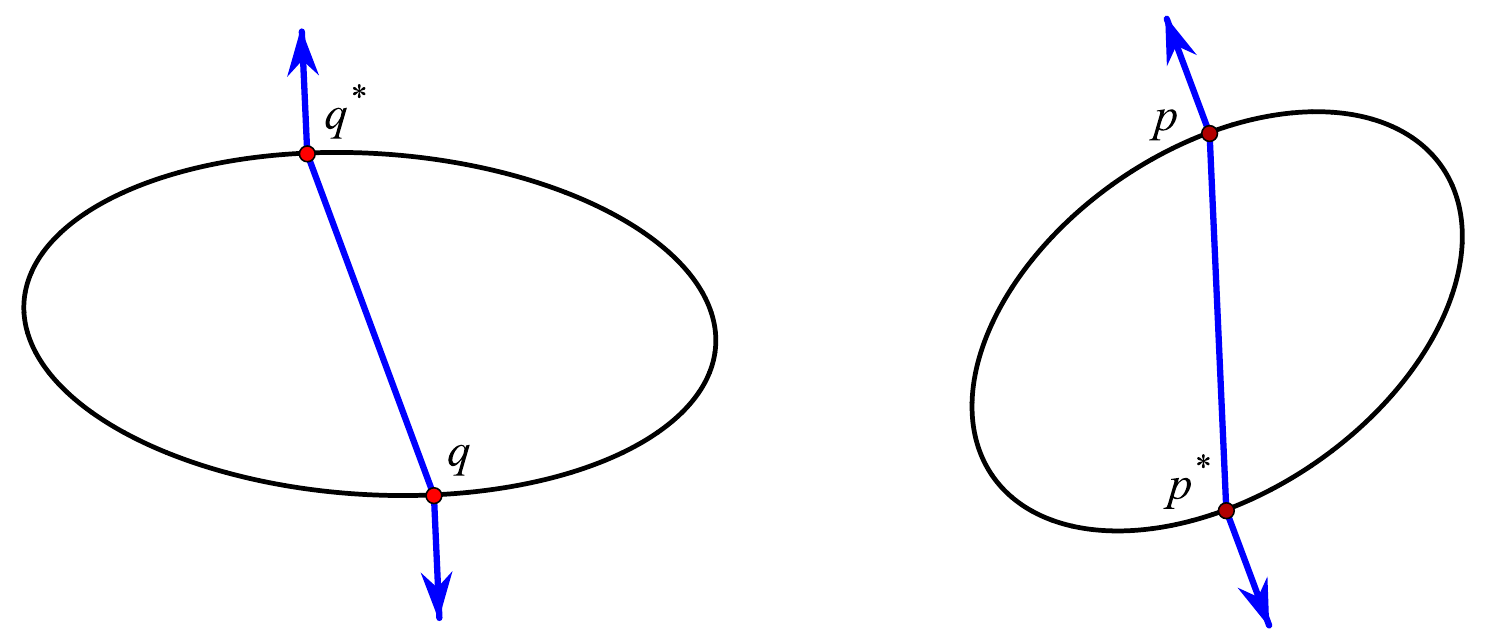}
\caption{A two-periodic orbit of the Minkowski billiard $(q, p, q^*, p^*)$.
The line $qq^*$ is parallel to the normals at the points $p$ and $p^*$, 
and the line $pp^*$ is parallel to the normals at the points $q$ and $q^*$.}
\label{2per}
\end{figure}

\proof
For convenience, identify $V$ with $\R^n$, so we use normals as Legendre
transforms. A 2-periodic orbit is presented in Figure \ref{2per}. 

For every point $q \in Q$, denote by $q^*$ the point where the normal $N(q^*)$
is antiparallel to that at $q$, and let $p^*$ have a similar meaning for points
$p \in P$. A 2-periodic orbit of the Minkowski billiard is a quadruple
$(q,p,q^*,p^*)$ such that $q^* - q \sim N(p)$ and $p^* - p \sim N(q)$.

Let $\Delta = \{(q,p,q_1,p_1)\ |\ q=q_1\ \ {\rm or}\ \ p=p_1\}$ be the
``diagonal.'' Consider the function
$$
\Phi(q,p,q_1,p_1) = (q_1-q)\cdot(p-p_1)
$$ 
from Lemma \ref{lm:genf} in the complement of $\Delta$. The group $\Z_2 \times
\Z_2$ acts on the quadruples $(q,p,q_1,p_1)$ by swapping $q$ with $q_1$ and $p$
with $p_1$. The function $\Phi$ is odd with respect to each factor $\Z_2$ and is
invariant under the diagonal action of $\Z_2$: $(q,p,q_1,p_1) \mapsto
(q_1,p_1,q,p)$.

A critical point of $\Phi$ off $\Delta$ is a quadruple $(q,p,q_1,p_1)$ such that
$q_1-q$ is collinear with $N(p)$ and $N(p_1)$, and $p_1-p$ is collinear with
$N(q)$ and $N(q_1)$. Hence $q_1=q^*, p_1=p^*$, and a 2-periodic orbit of the
Minkowski billiard corresponds to a $\Z_2 \times \Z_2$-orbit of this critical
point. These critical values of $\Phi$ are separated from zero.

Let $M \subset Q\times P\times Q\times P$ be the set of quadruples where $\Phi$
is positive. In particular, $q_1 \ne q$ and $p_1 \neq p$ in $M$. We claim that
$M$ is homotopically equivalent to $S^{n-1}$.

Let 
$$
N=\{(u,v)\ |\  u,v \in \R^n, |u|=|v|=1, u\cdot v > 0\}.
$$
Then $N$ is homotopically equivalent to $S^{n-1}$: for every unit vector $u$,
the set of unit vectors $v$ with $u \cdot v > 0$ is a hemisphere, a contractible
set. Next, we have a projection $M \to N$ that assigns to $(q,p,q_1,p_1)$ the
normalized to unit vectors $q_1-q$ and $p-p_1$. The fibers of this projection
are contractible as well, which proves the claim.

Let $\eps >0$ be small enough that there are no critical points of $\Phi$ in
$M$ with the critical values greater than or equal to $\eps$. Let $M_\eps$ be the
set where $\Phi \ge \eps$. Then $M_\eps$ is a manifold with boundary, and the
gradient of $\Phi$ has the inward direction along the boundary. Therefore we can
apply the Morse--Lusternik--Schnirelmann theory to  $M_\eps$. Since the diagonal
action of $\Z_2$ is free on $M_\eps$ and $\Phi$ is $\Z_2$-invariant, the number
of critical $\Z_2$-orbits is not less than the category of $M/\Z_2$, which is
homotopically equivalent to $\RP^{n-1}$. The Lusternik--Schnirelmann category of
this space is $n$, and the result follows.
\proofend

\begin{remark}
{\rm It is plausible that Theorem \ref{thm:diam} extends to pairs of wave fronts
    in the spirit of \cite{Pu1,Pu2}.
}
\end{remark}

\subsection{Symplectic structure} \label{subsect:symp}

The space $V \times V^*$ has a canonical symplectic structure $\Omega=dq \wedge
dp$, and this makes it possible to describe Minkowski billiard maps as
symplectic transformations.

In the recent literature on the relation between Minkowski billiards and the Viterbo
and Mahler conjectures, the dynamics of Minkowski billiards are described in terms 
of the Reeb flow on the boundary of the Lagrangian product of two convex bodies; 
see \cite{ABKS,AO,AKO,Ba,HO,KR,Ru1,Ru,Ru2}. We present below a somewhat different
treatment, similar to  the more familiar case of Birkhoff billiards.

Consider the following Melrose hexagonal diagram  useful in description of
billiard models (see \cite{Me} and, specifically, \cite{GT} concerning Finsler
billiards):
\begin{center}
\begin{tikzpicture}[
  node distance=2.6cm,
  every node/.style={font=\normalsize},
  arrow/.style={->, thick}
]
% Nodes
\node (VV)        at (0,3)   {$V \times V^{*}$};
\node (QVp)       at (-2.5,1.5) {$Q \times V^{*}$};
\node (VP)        at (2.5,1.5) {$V \times P$};
\node (QP)        at (0,0)   {$Q \times P$};
\node (TQ)        at (-2.5,-2) {$T^{*}Q$};
\node (TP)        at (2.5,-2) {$T^{*}P$};
% Arrows (upper hexagon)
\draw[arrow] (QVp) -- (VV);
\draw[arrow] (VP)  -- (VV);
\draw[arrow] (QP) -- (QVp);
\draw[arrow] (QP) -- (VP);
% Vertical arrows
\draw[arrow] (QVp) -- (TQ);
\draw[arrow] (VP)  -- (TP);
% Diagonal arrows from Q x P
\draw[arrow] (QP) -- (TQ);
\draw[arrow] (QP) -- (TP);
\end{tikzpicture}
\end{center}
Here the arrows having upward directions are inclusions. Let us describe the
vertical downward arrows as symplectic reduction maps. Consider the left one; the right one is treated the
same way.

The restriction of the symplectic form $\Omega$ on the hypersurface $q\times V^*
\subset V\times V^*$ has 1-dimensional kernels at all points, and the integral
curves of this line field are the characteristic curves.

\begin{lemma} \label{lm:char}
The characteristic line through point $(q,p)$ is $(q, p+ t{\mathcal L}(q))$
where $t \in \R$.
\end{lemma}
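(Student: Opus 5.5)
The plan is to compute the kernel of $\Omega|_{Q\times V^*}$ directly; Lemma \ref{lm:char} is a pointwise linear-algebra statement. (The phrase ``$q\times V^*$'' in the text should be read as the hypersurface $Q\times V^*\subset V\times V^*$, which is the domain of the left vertical arrow of the diagram.) At a point $(q,p)$ with $q\in Q$, the tangent space is
\[
W:=T_{(q,p)}(Q\times V^*) = T_qQ\times V^*,
\]
a hyperplane in $V\times V^*$. For the canonical form $\Omega=dq\wedge dp$ we use the convention
\[
\Omega((a,\alpha),(b,\beta)) = a\cdot\beta - b\cdot\alpha .
\]
Since $\Omega$ is nondegenerate and $W$ has codimension one, the kernel of $\Omega|_W$ is the symplectic orthogonal $W^\Omega$. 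This subspace is one-dimensional and lies in $W$, as for any hyperplane. So it suffices to exhibit one nonzero vector of $W$ that is $\Omega$-orthogonal to all of $W$.

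The candidate is $(0,{\mathcal L}(q))$. It lies in $W$ because its $V$-component is $0\in T_qQ$. For any $(b,\beta)\in W$ we get
\[
\Omega\bigl((0,{\mathcal L}(q)),(b,\beta)\bigr) = 0\cdot\beta - b\cdot{\mathcal L}(q) = -\,b\cdot {\mathcal L}(q).
\]
This vanishes because $b\in T_qQ=\Ker {\mathcal L}(q)$, which is the defining property of the Legendre transform in Section \ref{subsect:polar}. Also ${\mathcal L}(q)\neq 0$, since $q\cdot{\mathcal L}(q)=1$. Hence the characteristic direction at $(q,p)$ is spanned by $(0,{\mathcal L}(q))$.

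To pass from the line field to the integral curves, note that this direction depends only on $q$, not on $p$. Therefore the curve $t\mapsto (q,p+t{\mathcal L}(q))$ has constant velocity $(0,{\mathcal L}(q))$, and this velocity is the characteristic direction at every point of the curve. By uniqueness of integral curves of a line field, this straight line is the characteristic through $(q,p)$, and it is defined for all $t\in\R$.

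The only step needing care is identifying the kernel with the symplectic orthogonal and checking its dimension; that is standard linear algebra. Everything else is a one-line computation. The same argument with the roles of $V$ and $V^*$ swapped gives the characteristics $(q+t{\mathcal L}(p),p)$ on $V\times P$.
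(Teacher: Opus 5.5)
Your proof is correct and is the same argument as the paper's: contracting ${\mathcal L}(q)\,\partial/\partial p$ (your vector $(0,{\mathcal L}(q))$) into $\Omega$ gives $\pm{\mathcal L}(q)\,dq$, which vanishes on $T_qQ$ by the definition of the Legendre transform. Your dimension count and your passage to integral curves spell out steps the paper leaves implicit. One small caution: for a merely $C^1$ hypersurface the line field is only continuous, so rather than citing uniqueness of integral curves, note directly that any curve tangent to the field has constant $q$-component and therefore stays on the line $p+\R\,{\mathcal L}(q)$.
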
 

\proof
Substitute the vector ${\mathcal L}(q) \partial/\partial p$ into $\Omega$ to
obtain the 1-form ${\mathcal L}(q) dq$. This 1-form vanishes on $Q$ by the
definition of the Legendre transform.
\proofend

The left vertical projection is the quotient of the space $Q \times V^*$ by the
characteristics, and by the lemma above, the quotient space is the cotangent
bundle $T^* Q$ with its canonical symplectic structure, obtained from $\Omega$
by symplectic reduction, that is, the restriction to a hypersurface $Q \times
V^*$ and factorizing by the kernel of this restriction.

Now consider the map $Q\times P$ to $T^* Q$, the composition of the inclusion
$Q\times P \subset Q \times V^*$ and the projection $Q \times V^* \to T^* Q$.
The image consists of points $(q,p)$ where $|p| = 1$ in the Minkowski norm
defined by $P$. Every point in the image has two preimages in
$Q\times P$, and an involution arises that interchanges these two preimages. By
construction, this involution preserves the restriction of $\Omega$ to $Q\times
P$, which is symplectic.

Likewise, one has another involution of $Q\times P$, similarly associated with
the map $Q\times P$ to $T^* P$. The composition of these two involutions is the
Minkowski billiard map, implying 

\begin{theorem} \label{thm:symp}
The Minkowski billiard map is symplectic: it preserves the restriction of the
canonical symplectic structure of $V\times V^*$ to $Q\times P$.
\end{theorem}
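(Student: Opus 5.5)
We plan to prove that the Minkowski billiard map is the composition of two involutions of $Q\times P$, each preserving $\omega:=\Omega|_{Q\times P}$.

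First, $\omega$ is symplectic. At $(q,p)$, $\dim T_qQ+\dim T_pP=2n-2$. A vector $(\xi,\eta)\in T_qQ\times T_pP$ lies in the kernel iff $\eta\cdot\xi'=0$ for all $\xi'\in T_qQ$ and $\xi\cdot\eta'=0$ for all $\eta'\in T_pP$. The first condition gives $\eta\sim{\mathcal L}(q)$, and since $\eta\in T_pP$, i.e. $\eta\cdot{\mathcal L}(p)=0$, this forces $\eta=0$ whenever ${\mathcal L}(q)\cdot{\mathcal L}(p)\ne 0$. The same argument gives $\xi=0$. Hence $\omega$ is nondegenerate on ${\mathcal S}$ (and on its mirror half), and degenerates only along ${\mathcal O}$.

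Next, define the involution $\sigma_Q$. Fix $q$. By Lemma \ref{lm:char}, the line $p+t{\mathcal L}(q)$ through $p\in P$ is a characteristic of $Q\times V^*$. Since $P$ is strictly convex and ${\mathcal L}(q)$ is transverse to $T_pP$ off ${\mathcal O}$, this line meets $P$ in exactly one other point $\bar p$; set $\sigma_Q(q,p)=(q,\bar p)$. Both points have the same image under $\pi_Q:Q\times P\to T^*Q$, and $\pi_Q|_{Q\times P}$ is a local diffeomorphism onto its image off ${\mathcal O}$. By reduction, $\Omega|_{Q\times V^*}=\pi_Q^*\omega_{T^*Q}$, so $\omega=\pi_Q^*\omega_{T^*Q}$ restricted to $Q\times P$. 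Since $\pi_Q\circ\sigma_Q=\pi_Q$, we get $\sigma_Q^*\omega=\omega$. The same construction, using characteristics $q+t{\mathcal L}(p)$ of $V\times P$, gives an involution $\sigma_P(q,p)=(\bar q,p)$ preserving $\omega$.

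Finally, identify the composition with the billiard map. Starting from $(q,p)$, $\sigma_P$ moves $q$ along the line spanned by ${\mathcal L}(p)=v$ to the second intersection $q_1$. Then $\sigma_Q$ at $q_1$ moves $p$ along ${\mathcal L}(q_1)$ to $p_1$, so $p_1-p\parallel{\mathcal L}(q_1)$, i.e. $({\mathcal L}(p_1)-{\mathcal L}(p))$ vanishes on $T_{q_1}Q$ in the appropriate dual sense. This is exactly $\varphi_M$, and by Lemma \ref{lm:cordef} the orientation signs match. Therefore $\varphi_M=\sigma_Q\circ\sigma_P$ preserves $\omega$.

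The main obstacle is the careful check that $\sigma_Q$ is smooth and that $\pi_Q$ is a local diffeomorphism on the relevant half. Both reduce to the transversality ${\mathcal L}(q)\cdot{\mathcal L}(p)\neq0$, the same condition that made $\omega$ nondegenerate. On ${\mathcal O}$ the map $\varphi_M$ is the identity, so the statement holds there by continuity.
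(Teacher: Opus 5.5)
Your proof is correct and is essentially the paper's argument: you factor $\varphi_M$ as the composition of the two involutions of $Q\times P$ that swap the two preimages under the reductions $Q\times P\to T^*Q$ and $Q\times P\to T^*P$ in the Melrose diagram, and each involution preserves $\Omega|_{Q\times P}$ because it commutes with the corresponding projection. You also supply details the paper leaves implicit, namely nondegeneracy off ${\mathcal O}$ and the transversality/local-diffeomorphism check; one small slip is that the reflection condition reads $({\mathcal L}(v_1)-{\mathcal L}(v))|_{T_{q_1}Q}=(p_1-p)|_{T_{q_1}Q}=0$, not a condition on ${\mathcal L}(p_1)-{\mathcal L}(p)$.
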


Let us mention that the same invariant symplectic structure can be obtained from
a generating function via the discrete Lagrangian systems formalism, see
\cite{Ve}. We briefly describe this approach.

Set $L(q,q_1)=|q_1-q|$. According to (\ref{eq:ref}), the reflection law $(q,q_1)
\mapsto (q_1,q_2)$ is given, in the vector notation, by
$$
L_2(q,q_1)+L_1(q_1,q_2) =0,
$$
where the subscripts denote the partial derivatives with respect to the first
and second variables. Take the exterior derivative
$$
L_{12}(q,q_1) dq+L_{22}(q,q_1) dq_1+L_{11}(q_1,q_2)dq_1+L_{12}(q_1,q_2)dq_2=0
$$
and wedge multiply by $dq_1$ on the left to obtain an invariant 2-form
$$
\omega:= L_{12}(q,q_1) dq_1\wedge dq = L_{12}(q_1,q_2) dq_2\wedge dq_1.
$$
Thus $\omega = d(L_1(q,q_1) dq)$.
 
On the other hand, tautologically, when a point $q$ moves toward a point $q_1$
with the Minkowski unit velocity $v$, the rate of decrease of the distance
$L(q,q_1)$ is unit. It follows that  $L_1(q,q_1) dq = -pdq$ (see Lemma 3.1 in
\cite{GT}), and hence $\omega=dq\wedge dp$, the same symplectic form as obtained
above via the Melrose diagram.

\section{Minkowski billiards in symplectic space and symplectic billiards}
\label{sect:Minksympl}

\subsection{Symplectic polar duality} \label{subsect:sympd}
Let $(V,\omega)$ be a symplectic vector space. Identify $V$ with its dual space
$V^*$ via the symplectic structure: to a vector $v\in V$ assign the covector
$\omega(\cdot, v)$.

Let $Q\subset V$ be a strictly convex closed $C^1$-smooth hypersurface that
contains the origin in its interior. Then the polar dual hypersurface comprises
the covectors $\omega(\cdot, R),\ R \in V$, such that
$$
\omega(u, R) = 0\ \ {\rm for\ all}\ \ u \in T_q Q,\ \ {\rm and}\ \ \omega(q, R)
= 1.
$$
That is, $R(q)$ is the Reeb field of the contact 1-form $\omega(q,\cdot)$ on
$Q$. 

By a slight abuse of language, we will continue to refer to the map $q \mapsto
R(q)$ as the (symplectic) Legendre transform. We will also denote by $Q^*$ the
{\it symplectic polar} of $Q$, which, under the identification  $V\simeq \C^n$,
is obtained  from the Euclidean polar by the composition with the complex
structure $J$.
%The map $q \mapsto R(q)$ is the Legendre transform, and its image, $Q^*$, is
%the {\it symplectic polar} of $Q$. If $V$ is identified with $\C^n$, then the
%symplectic polar is obtained from the respective Euclidean polar by the complex
%rotation $J$, given by multiplication by $\sqrt{-1}$.
See \cite{ACT,Be,BeB,BK,GG} for the recent study and applications of this
notion.

\subsection{Symplectic billiards as ``square root" of Minkowski
billiards}
\label{subsect:sympM}
We adapt the definition of the Minkowski billiard system to the symplectic
setting. In particular, we identify the space with its dual via the symplectic
structure. We will still call the resulting map Minkowski billiard map. 

Let $(V,\omega)$ be a symplectic vector space, and let $Q,P \subset V$ be two
strictly convex closed $C^1$-smooth hypersurfaces. The {\it Minkowski 
billiard map} with respect to $(Q, P)$ sends the pair $(q, p)$ to 
$(q_1, p_1)$ if 
\[q_{1} - q \sim R(p)\ \ {\rm and}\ \ p_1 - p \sim R(q_1),\]
as shown in Figure \ref{sympmap}.\footnote{Compared with the usual Minkowski 
billiard reflection (Figure \ref{refl1}), one observes a reversal of orientation. 
This sign difference reflects the fact that the symplectic polarity is not an 
involution: identifying vectors and covectors via the symplectic form satisfies 
$\omega^{-1}\circ\omega=-\mathrm{Id}$. See Lemma 2.5 in \cite{ACT}.}

The phase space is then
defined as
$$
{\mathcal S}= \{(q,p) | q\in Q, p \in P, \omega(R(q), R(p)) > 0\}, 
$$
compare with (\ref{eq:phase}), see \cite{ALW} for the planar polygonal case.

\begin{figure}[ht]
\centering
\includegraphics[width=5in]{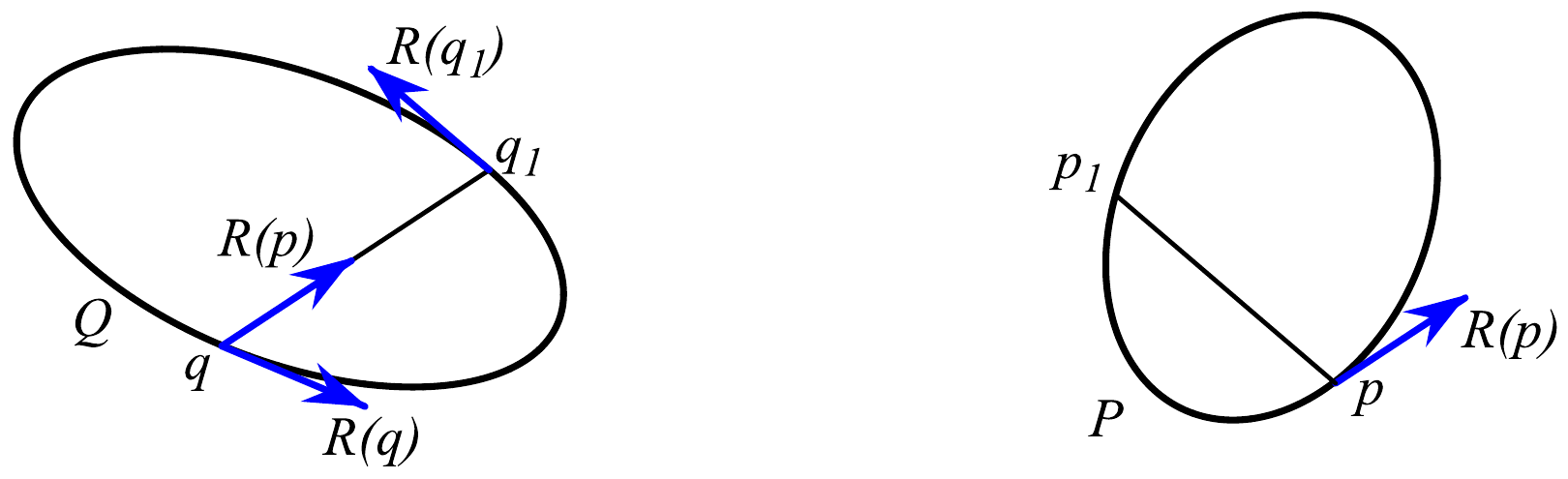}
\caption{Minkowski billiard map: $(q,p)\mapsto (q_1,p_1)$.}
\label{sympmap}
\end{figure}

Assume now that the hypersurfaces $Q$ and $P$ coincide.\footnote{In contrast,
the well-studied particular case of Minkowski billiards, related to the Viterbo
and Mahler conjectures, is when $P=Q^*$, see \cite{ABKS,AKO,HO}.} The respective
Minkowski billiard map is shown in Figure \ref{sympone}:
$(q,p)\mapsto (q_1,p_1)$. But this figure also presents the {\it symplectic
billiard map} $\varphi_S$: $(q,p)\mapsto(p,q_1)$, as defined in \cite{AT}. This leads to the 
following result.

\begin{figure}[ht]
\centering
\includegraphics[width=2.4in]{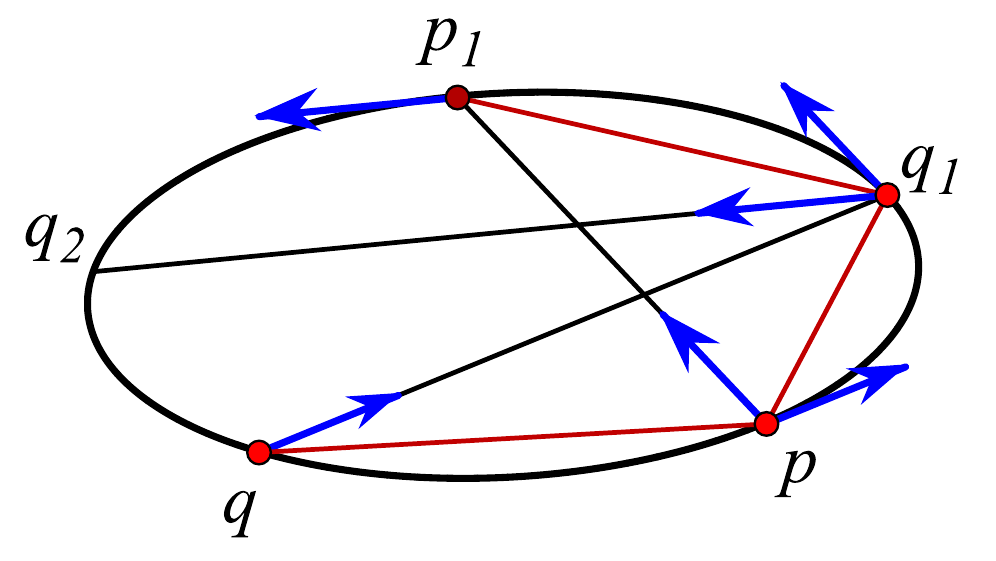}
\caption{Symplectic billiard map: $\varphi_S (q,p)= (p,q_1), \varphi_S (p,q_1)= (q_1,p_1)$; 
Minkowski billiard map: $\varphi_M (q,p)= (q_1,p_1)$.}
\label{sympone}
\end{figure}

\begin{theorem} \label{thm:root}
Assume that two strictly convex closed $C^1$-smooth hypersurfaces  $Q$ and $P$ in
symplectic space coincide. Then the symplectic billiard map with respect to $Q$ is 
the ``square root" of the Minkowski billiard map with respect to the 
pair $(Q, P)$, i.e., the latter is the second iteration of the former: $\varphi_M = \varphi_S^2$.
\end{theorem}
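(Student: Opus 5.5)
The plan is to unwind both definitions and check that they impose the same conditions. First I will recast the symplectic billiard reflection of \cite{AT} in terms of the symplectic Legendre transform of Section \ref{subsect:sympd}. The characteristic direction at $y\in Q$ is the kernel of $\omega|_{T_yQ}$. Since $\omega(u,R(y))=0$ for all $u\in T_yQ$, this kernel is spanned by $R(y)$. The rule $xy\mapsto yz$ with $xz$ parallel to the characteristic direction at $y$ therefore reads $z-x \parallel R(y)$. I then fix the orientation. With the phase space of \cite{AT} (pairs $(x,y)$ with $\omega(x,y)>0$ after placing the origin inside, or equivalently $y$ positioned relative to $x$ along the orientation), the correct sign should be $z-x\sim R(y)$.

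To confirm this I will use convexity. The point $z$ lies on the side of $Q$ determined by the Reeb direction at $y$, and I will compare with the sign convention $\omega(R(q),R(p))>0$ defining ${\mathcal S}$. Concretely, I expect this compatibility to be the same sign check as in Lemma \ref{lm:cordef}, transported through the symplectic identification.

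With that in place, I apply $\varphi_S$ twice. Starting from $(q,p)$, the first step gives $\varphi_S(q,p)=(p,q_1)$ with $q_1-q\sim R(p)$. The second step gives $\varphi_S(p,q_1)=(q_1,p_1)$ with $p_1-p\sim R(q_1)$. These are exactly the two defining conditions of the Minkowski map for the pair $(Q,P)$ with $P=Q$, namely $q_1-q\sim R(p)$ and $p_1-p\sim R(q_1)$. Hence $\varphi_S^2(q,p)=\varphi_M(q,p)$.

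I also need to check that $\varphi_M$ is well-defined and single-valued on these configurations. Strict convexity gives uniqueness: the line through $q$ in direction $R(p)$ meets $Q$ in exactly one other point. Likewise, the line through $p$ in direction $R(q_1)$ meets $P=Q$ in one other point.

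The main obstacle is matching the phase spaces and signs. I must verify two things. First, ${\mathcal S}=\{\omega(R(q),R(p))>0\}$ coincides with the symplectic billiard phase space of \cite{AT}, so the signs are $\sim$ and not anti-$\sim$. Second, $\varphi_S$ maps this set to itself, so the intermediate pair $(p,q_1)$ is again admissible.

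For both points I would try the argument of Lemma \ref{lm:cordef}. By convexity, $\omega(q_1-q,R(q_1))$ has a definite sign. Substituting $q_1-q\sim R(p)$ then gives $\omega(R(p),R(q_1))>0$, which is the admissibility of $(p,q_1)$. Any remaining discrepancy in orientation, such as the footnote's $\omega^{-1}\circ\omega=-\mathrm{Id}$, would be absorbed by choosing the orientation of $\omega$ consistently in both definitions.
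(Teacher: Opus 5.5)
Your proposal is correct and is essentially the paper's argument: you identify the characteristic direction at $y$ with $R(y)$ and read off $\varphi_S^2(q,p)=(q_1,p_1)$ with $q_1-q\sim R(p)$ and $p_1-p\sim R(q_1)$, which is exactly what Figure~\ref{sympone} encodes. Your convexity check that $(p,q_1)$ stays in $\{\omega(R(q),R(p))>0\}$ is a useful addition that the paper leaves implicit.

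One small correction. The phase space should be described by $\omega(R(x),R(y))>0$, equivalently positivity of $\omega$ on the outer normals, and not by the origin-dependent condition $\omega(x,y)>0$. With the correct description the sign $z-x\sim R(y)$ is immediate: $\omega(R(y),R(x))<0$ means $R(y)$ points into $Q$ at $x$.
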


In particular, the generating function $\Phi$ in (\ref{eq:gen}) becomes
$$
\sum_{i=1}^n \omega (q_{i+1}-q_i, p_i) = -\sum_{i=1}^n
\big(\omega(q_i,p_i)+\omega(p_i,q_{i+1})\big)
$$
that is, the (negative) symplectic area of the polygon $(\ldots q_i , p_i,  q_{i+1}
\ldots)$.

\subsection{Some results on symplectic billiards revisited} \label{subsect:ell}
In this section we show how some results on symplectic billiards, obtained in
\cite{AT}, follow from our current setting. We also extend some of these results
to higher dimensions and greater periods.

\subsubsection{Four-periodic symplectic billiard orbits} 
It is proved in \cite{AT}, Theorem 12,  that the number of 4-periodic symplectic
billiard orbits in $2n$-dimensional symplectic space is not less than $2n$. In
view of the correspondence between symplectic billiards and Minkowski billiards,
this result follows from Theorem \ref{thm:diam} above.

\subsubsection{Ellipsoids}
Consider a Minkowski billiard system associated with a pair of hypersurfaces
$(Q,P)$, where $P$ is an ellipsoid centered at the origin. As we mentioned earlier, 
in this case one is dealing with the usual billiard in $Q$ with the Euclidean 
metric defined by $P$. If $Q$ is also an ellipsoid, this billiard system is 
completely integrable, see, e.g., \cite{Ve}.

Now consider the symplectic billiard in an ellipsoid. Then we are in the setting
described in the preceding paragraph, and Theorem \ref{thm:root} implies that
if $(\ldots q,p,q_1,p_1,q_2,p_2,\ldots)$ is a trajectory of the symplectic
billiard, then $(\ldots q, q_1,q_2,\ldots)$ is a trajectory of the respective
Minkowski billiard. In particular, the symplectic billiard map in an ellipsoid is
completely integrable. This is another result of \cite{AT}, see Theorem 10
therein. 

\subsubsection{Radon curves and Minkowski bodies of constant width}

The Birkhoff billiard in a curve of constant width possesses an invariant curve
consisting of 2-periodic points, the diameters of the curve, i.e., the chords
that are orthogonal to the curve at both endpoints. Likewise for bodies of
constant width in higher dimensions. 

Consider the Minkowski billiard defined by hypersurfaces $Q\subset V$ and
$P\subset V^*$, where dim $V=n$. Let 
$$
\bar Q := Q \oplus (-Q),\ \bar P := P \oplus (-P)
$$
be the symmetrizations of $Q$ and $P$, respectively (where $\oplus$ denotes the
Minkowski sum).

\begin{lemma} \label{lm:width}
If $\bar P=\bar Q^*$ then the Minkowski billiard has an $(n-1)$-parameter family
of 2-periodic orbits: such an orbit originates at every point $q\in Q$ and
exists for every direction.
\end{lemma}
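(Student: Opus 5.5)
The plan is to reduce the statement to a direct computation with support functions and the Legendre transform of the symmetrized bodies. As in the proof of Theorem \ref{thm:diam}, a 2-periodic orbit is a quadruple $(q,p,q^*,p^*)$, where $q^*$ (resp.\ $p^*$) is the point of $Q$ (resp.\ $P$) whose conormal is antiparallel to that at $q$ (resp.\ $p$), and where
$$q^*-q\sim {\mathcal L}(p), \qquad p^*-p\sim {\mathcal L}(q).$$
So I fix an arbitrary $q\in Q$ and try to produce the required $p$. The $(n-1)$ parameters are then $q\in Q$, or equivalently the conormal direction $\eta\sim{\mathcal L}(q)$, which ranges over all directions.

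\emph{Step 1: the boundary of a symmetrization.} For a covector direction $\eta$, write $q(\eta)\in Q$ for the point with outward conormal $\sim\eta$; strict convexity makes it unique. The body $-Q$ has conormal $\eta$ at $-q(-\eta)$. Hence the point of $\bar Q=Q\oplus(-Q)$ with conormal $\eta$ is
$$q(\eta)-q(-\eta)=q-q^*.$$
Its support function is the width $h_Q(\eta)+h_Q(-\eta)$. In the same way, the point of $\bar P$ with conormal direction $w\in V$ is $p-p^*$, where $p\in P$ is the point with ${\mathcal L}(p)\sim w$. Consequently the Legendre direction of $\bar P$ at $p-p^*$ coincides with the Legendre direction of $P$ at $p$. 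I will check that $\bar Q,\bar P$ are again strictly convex and $C^1$, so that these points are unique and the Legendre transforms of $\bar Q,\bar P$ are defined.

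\emph{Step 2: apply the hypothesis.} Since $\bar P=\bar Q^*$ and the Legendre transform $\bar Q\to\bar P$ is inverse to $\bar P\to\bar Q$, the point $x:=q^*-q\in\bar Q$, which has conormal $-\eta$, corresponds to the covector $\xi=-\lambda\eta\in\bar P$ with $\lambda=1/\big(\eta\cdot(q-q^*)\big)>0$, and the Legendre image of $\xi$ in $\bar Q$ is $x$ itself. By Step 1 we can write $\xi=p-p^*$ for a unique $p\in P$, and this $p$ satisfies ${\mathcal L}(p)\sim x=q^*-q$. Also $p^*-p=\lambda\eta\sim{\mathcal L}(q)$. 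This gives both orbit relations at the first reflection.

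\emph{Step 3: the second reflection.} The relations for the return trip, $q-q^*\sim{\mathcal L}(p^*)$ and $p-p^*\sim{\mathcal L}(q^*)$, follow at once because ${\mathcal L}(q^*)\sim-\eta$ and ${\mathcal L}(p^*)\sim-{\mathcal L}(p)$. The phase-space condition ${\mathcal L}(q)\cdot{\mathcal L}(p)<0$ holds by the same convexity argument as in Lemma \ref{lm:cordef}: ${\mathcal L}(p)\sim q^*-q$ and ${\mathcal L}(q)\cdot(q^*-q)<0$. Letting $q$ run over $Q$ gives the claimed $(n-1)$-parameter family, with one orbit through every point of $Q$ and one for every direction.

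The main obstacle is bookkeeping rather than a new idea. The delicate points are the sign and orientation conventions in the identification of boundary points of $\bar Q,\bar P$, in particular that the Legendre direction of $\bar P$ at $p-p^*$ equals that of $P$ at $p$, and the use of polar duality in both directions. I would settle these first via support functions, $h_{\bar Q}(\eta)=h_Q(\eta)+h_Q(-\eta)$ with gradient $q(\eta)-q(-\eta)$, together with the fact that the Legendre transform of $\{h_{\bar Q}=1\}$ is the gradient of $h_{\bar Q}$.
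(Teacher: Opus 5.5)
Your proof is correct and follows essentially the paper's route: both rest on the fact that the point of $\bar Q$ (resp.\ $\bar P$) with a given conormal is $q-q^*$ (resp.\ $p-p^*$) for the points of $Q$ (resp.\ $P$) with that conormal, combined with the involutivity of the Legendre transform between $\bar Q$ and $\bar P=\bar Q^*$. The paper packages the first fact as a reduction to centrally symmetric $Q,P$, while you carry the correspondence through directly, and you also check two things the paper leaves implicit: the phase-space condition $\mathcal L(q)\cdot\mathcal L(p)<0$, and that the symmetrizations are again strictly convex and $C^1$.
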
 

\proof
Referring to Figure \ref{2per} and the notations of the proof of Theorem
\ref{thm:diam}, one needs to do the following: given $q \in Q$, define $p$ by
$N(p) \sim q^* - q$, and check that $p^* - p \sim N(q)$.

First, observe that if the above property holds for $\bar Q$ and $\bar P$ then
it holds for $Q$ and $P$ as well. Indeed, $q^*-q$ is the point of $\bar Q$ with
the same (co)normal as at point $q^*$ of $Q$, and likewise for $P$. Thus we may
assume that $Q$ and $P$ are centrally symmetric to start with.

With this assumption, $N(p) \sim q^* - q$ means that $p$ is the Legendre
transform of $q^*=-q$ and, similarly, $p^* - p \sim N(q)$ means that $Q$ is the
Legendre transform of $p^*=-p$. These two conditions are equivalent because the
Legendre transform is an involution.
\proofend

For example, if $P$ is a circle, then we are dealing with Birkhoff billiards. If
$Q$ is a curve of constant width, then $\bar Q$ is also a circle, and $\bar
Q^*=\bar P$. More generally, one can take, as $Q$ and $P$, two curves of
constant width.

It was observed in \cite[Section 2.4.2]{AT} that the symplectic billiard map with 
respect to a Radon curve possesses an invariant curve consisting of 4-periodic 
orbits. Radon curves are self-dual, that is, they satisfy the condition of Lemma
\ref{lm:width}, therefore this lemma implies the said observation concerning
Radon curves in \cite{AT}.

Note also that a recent paper \cite{BeB} contains a related result: for a
symplectically self-polar convex body, the outer billiard has an invariant
hypersurface consisting of centrally symmetric 4-periodic orbits. The relation
to symplectic billiard is that the midpoints of an outer billiard 4-periodic
orbit form a 4-periodic orbit of the symplectic billiard inside the same body.

\subsubsection{Periodic orbits} 

Concerning periodic orbits of symplectic billiards in $2n$-dimensional space,
the following two results are proved in \cite{AT}:

\noindent
{\it
1) For every $r \ge 2$, the symplectic billiard map has an $r$-periodic
trajectory;\\
2) The number of 3- and 4-periodic symplectic billiard trajectories is not less
than $2n$.
}\\
\noindent
(Here one is counting the orbits of the dihedral group $D_r$ acting on $r$-periodic
trajectories).

However, Minkowski billiards are a particular case of Finsler billiards.
Periodic orbits of the latter were studied in \cite{BHTZ}, and one can apply the
results obtained therein to symplectic billiards. 

The main result of \cite{BHTZ}, Theorem 1.2, specialized to the Minkowski
setting, is as follows. Consider the  Minkowski billiard map in a strictly convex
closed $C^1$-smooth hypersurface in $d\ge 3$-dimensional vector space. Let $r\ge
3$ be a prime number. Denote by $N_M(d,r)$ the number of $r$-periodic Minkowski billiard
orbits, where two orbits that differ by a cyclic permutation of points are
considered to be the same (i.e., here one counts the orbits modulo the action of
the cyclic group $\Z_r$). Then $N_M(d,r) \ge (r-1)(d-2)+1$.

We note that this result holds for every strictly convex closed $C^1$-smooth hypersurface,
that is, it is obtained via Lusternik–Schnirelmann theory; if the hypersurface is generic
in the sense that the Morse theory applies, and $d$ is even, the result is stronger: $N_M(d,r) \ge (r-1)d$.
We also note that these lower bounds apply to non-reversible Finsler billiards, such as the magnetic ones);
one expects to have stronger results for reversible ones. 

Consider a strictly convex closed $C^1$-smooth hypersurface in symplectic space $\R^{2n}$ and 
denote by $N_S(2n,r)$ the number of $r$-periodic symplectic billiard orbits, where, as before,  two orbits that differ by a cyclic permutation of points are considered to be the same. 

\begin{theorem} \label{thm:bd}
Let $r\ge 3$ be a prime number. Then $N_S(2n,2r) > (r-1)(n-1)$.
\end{theorem}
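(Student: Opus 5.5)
Via Theorem \ref{thm:root}, I will reduce the count of $2r$-periodic symplectic billiard orbits to a count of $r$-periodic Minkowski billiard orbits with $Q=P$, and then invoke the bound of \cite{BHTZ} with $d=2n$. Since the symplectic polar is the Euclidean polar composed with $J$, the Minkowski billiard with respect to the pair $(Q,Q)$ in $(V,\omega)$ is an ordinary Minkowski billiard in $V\cong\R^{2n}$: the table is $Q$ and the figuratrix is the polar dual $JQ$ (up to the orientation sign in the footnote, which only swaps the map with its inverse). So the main result of \cite{BHTZ} applies with $d=2n\ge 4$ whenever $n\ge 2$. For $n=1$ the right-hand side of Theorem \ref{thm:bd} is $0$, and existence of a $2r$-periodic orbit follows from the cited result of \cite{AT}.

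The correspondence runs as follows. A $2r$-periodic symplectic billiard orbit $(\dots,q_0,p_0,q_1,p_1,\dots,q_{r-1},p_{r-1})$ gives, by Theorem \ref{thm:root}, an $r$-periodic orbit $(q_0,p_0),\dots,(q_{r-1},p_{r-1})$ of $\varphi_M$ on $Q\times Q$. Conversely, an $r$-periodic Minkowski orbit produces, by interleaving its $q$'s and $p$'s, a closed polygon of $2r$ points that is a $\varphi_S$-orbit. In the Minkowski picture (Lemma \ref{lm:genf}) only the $q$-sequence is the billiard trajectory in $Q$, and the $p_i$ are determined by it as the Legendre preimages of the directions $q_{i+1}-q_i$. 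Hence $r$-periodic Minkowski trajectories in $Q$, counted modulo $\Z_r$, inject into $2r$-periodic symplectic billiard orbits counted modulo $\Z_{2r}$. Two Minkowski orbits give the same symplectic orbit only if one symplectic polygon is a cyclic shift of the other. A shift by an even number of steps is a $\Z_r$ shift of the Minkowski orbit. A shift by an odd number swaps the roles of $q$ and $p$, and this identifies at most two Minkowski classes per symplectic class.

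This factor of $2$ is the main obstacle. The odd shift sends the orbit $(q_i,p_i)$ to $(p_i,q_{i+1})$, which is again an $r$-periodic $\varphi_M$-orbit, so the crude count gives only $N_S(2n,2r)\ge \tfrac12 N_M(2n,r)\ge \tfrac12\big((r-1)(2n-2)+1\big)$, i.e.\ $N_S(2n,2r)\ge (r-1)(n-1)+\tfrac12$. Since $N_S$ is an integer, this already gives $N_S(2n,2r)\ge (r-1)(n-1)+1>(r-1)(n-1)$, which is exactly the strict inequality of Theorem \ref{thm:bd}. I must still check that a symplectic $2r$-orbit whose $q$- and $p$-halves give the same $\Z_r$-class (a "self-paired" orbit) is counted once on both sides. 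This is consistent with the bound: if there are $s$ self-paired symplectic classes and $t$ paired ones, then $N_M=s+2t$ and $N_S=s+t\ge N_M/2$.

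The remaining points to verify are as follows. The phase-space condition $\omega(R(q),R(p))>0$ must be preserved along the correspondence (Lemma \ref{lm:cordef}), so that the Minkowski orbits of \cite{BHTZ} are genuine and not degenerate ones with $q_{i+1}=q_i$. The \cite{BHTZ} orbits must be nondegenerate polygons, which uses that $r$ is prime so they are not iterates of shorter orbits. Finally, the $C^1$ and strict convexity assumptions on $Q$ must carry over to the figuratrix $JQ$.
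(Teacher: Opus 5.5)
Your proposal is correct and is essentially the paper's proof. You use $\varphi_M=\varphi_S^2$ to pass from $2r$-periodic symplectic orbits to $r$-periodic Minkowski orbits of the pair $(Q,Q)$, apply the \cite{BHTZ} bound with $d=2n$, and lose a factor of $2$ because the odd shift $(q_i,p_i)\mapsto(p_i,q_{i+1})$ yields a second Minkowski orbit from the same symplectic one, giving $N_S(2n,2r)\ge \tfrac12\big((r-1)(2n-2)+1\big)>(r-1)(n-1)$. Two small remarks: the map from Minkowski classes to symplectic classes is at most two-to-one rather than injective, as your own count $N_M=s+2t$ shows; and your separate treatment of $n=1$, where \cite{BHTZ} requires $d\ge 3$, covers a case the paper passes over silently.
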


Although this estimate is likely not sharp, it is linear in the
period and dimensions, similarly to the case of multi-dimensional Birkhoff billiards, see \cite{FT}.

\proof
The following correspondence 
\begin{equation} \label{eq:cor}
((q_1, p_1) (p_1,q_2) (q_2, p_2) \cdots (q_r, p_r)) \mapsto ((q_1,p_1) (q_2,p_2) \cdots (q_r,p_r))
\end{equation}
is a bijection between $2r$-periodic symplectic billiard orbits (on the left) and $r$-periodic Minkowski billiard orbits (on the right). The group $\Z_r$ naturally acts on both sides of (\ref{eq:cor}), 
\begin{equation}\nonumber
\begin{aligned}
((q_1, p_1) (p_1,q_2) (q_2, p_2) \cdots (q_r, p_r)) &\mapsto ((q_2, p_2) \cdots (q_r, p_r) (q_1, p_1)(p_1,q_2))\\
((q_1,p_1) (q_2,p_2) \cdots (q_r,p_r)) &\mapsto ((q_2,p_2) \cdots (q_r,p_r) (q_1,p_1) )
\end{aligned}
\end{equation}
and the map is clearly $\Z_r$ equivariant. 

The generator of the cyclic group $\Z_{2r}$ acts on $2r$-periodic symplectic billiard orbits as follows:
$$
((q_1, p_1) (p_1,q_2) (q_2, p_2) \cdots (q_r, p_r)) \mapsto ((p_1,q_2)(q_2, p_2) \cdots (q_r, p_r) (q_1, p_1)).\\
$$
This $\Z_{2r}$ action, of course, yields the same symplectic billiard orbit but corresponds, via the above map, to a different $r$-periodic Minkowski billiard orbit, namely, $(p_1,q_2), (p_2,q_3),\cdots, (p_r,q_1)$, see Figure \ref{sixper}.

%\marginpar{added this paragraph}
Let us rephrase this slightly using Theorem \ref{thm:root}, i.e., the fact that the symplectic billiard map is  the square root of the Minkowski billiard map: $\varphi_M = \varphi_S^2$. This implies that the $2r$-periodic symplectic billiard orbit starting at $(q_1,p_1)$ gives rise to two $r$ periodic orbits of the Minkowski billiard map. One starting at $(q_1,p_1)$ and another starting at $\varphi_S(q_1,p_1)=(p_1,q_2)$. 

It follows that 
$$
N_S(2n,2r) \geq \frac12 N_M(2n,r) \geq \frac{(r-1)(2n-2) +1}{2} > (r-1)(n-1),
$$
as claimed.
\proofend

\begin{figure}[ht]
\centering
\includegraphics[width=2.4in]{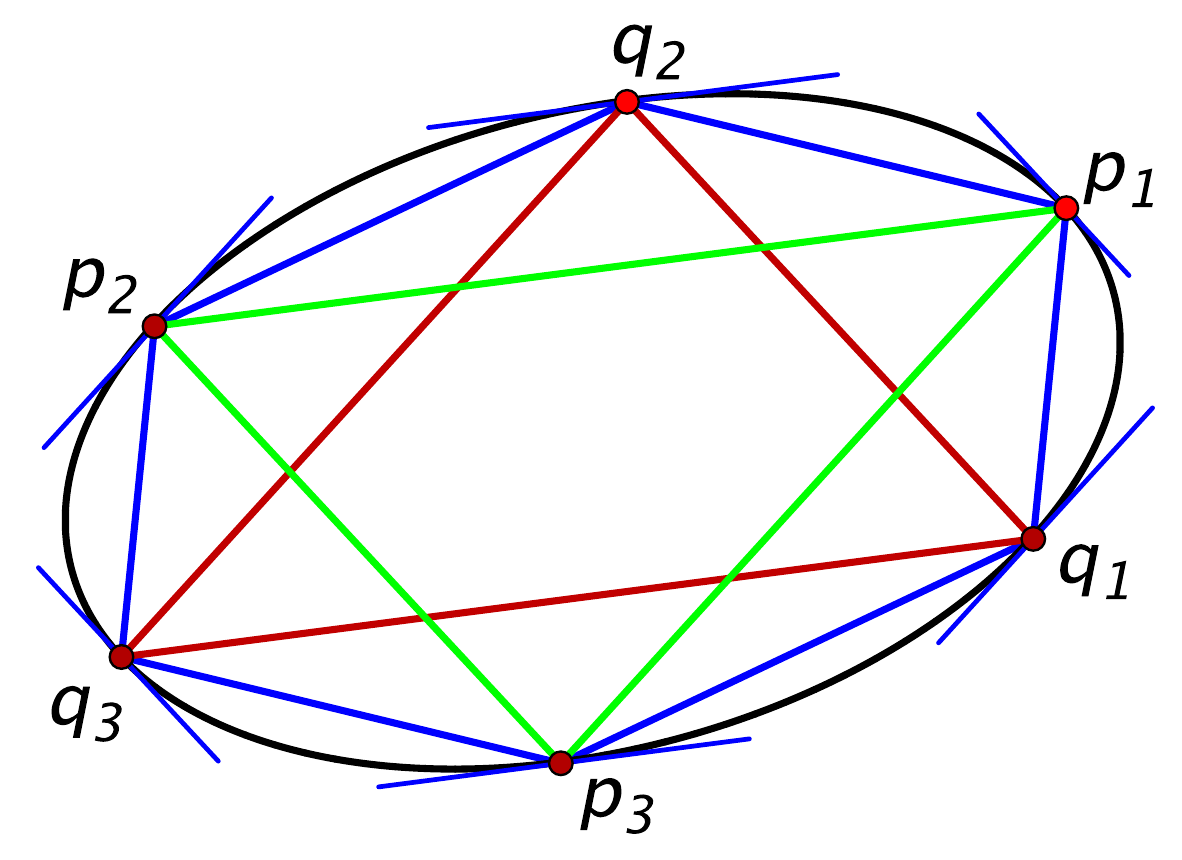} 
\caption{The correspondence between symplectic and Minkowski billiard orbits: 
the 6-periodic symplectic billiard orbit 
$((q_1, p_1) (p_1,q_2) (q_2, p_2) (p_2, q_3) (q_3, p_3) (p_3, q_1))$
corresponds to a 3-periodic Minkowski billiard orbit
$((q_1,p_1) (q_2,p_2) (q_3,p_3))$. The 6-periodic symplectic billiard orbit 
$
((p_1,q_2) (q_2, p_2) (p_2, q_3) (q_3, p_3) (p_3, q_1) (q_1, p_1)),
$
 obtained by a cyclic permutation of the points, corresponds to another 3-periodic Minkowski billiard orbit, 
$((p_1,q_2) (p_2, q_3) (p_3, q_1))$.
}
\label{sixper}
\end{figure}%\marginpar{there should be p,q in the Minkowski orbit}

\end{document}